\documentclass[12pt]{article}
\usepackage{a4wide}
\usepackage{euscript}
\usepackage{fullpage}
\usepackage[T2A]{fontenc}
\usepackage[utf8]{inputenc}
\usepackage[main=english, russian]{babel}
\usepackage{amsfonts}
\usepackage{amssymb, amsthm}
\usepackage{amsmath}
\usepackage{mathtools}
\usepackage{graphicx}
\usepackage{geometry}
\usepackage{tikz}
\usepackage{bbm}

\newtheorem{st}{Statement}
\newcommand{\Z}{\mathbb{Z}}
\newcommand{\R}{\mathbb{R}}

\newcommand{\bx}{\mathbf{x}}

\newcommand{\bz}{\mathbf{z}}

\newcommand{\D}{\mathbb D}
\newcommand{\Var}{\mathrm{Var}\,}
\newcommand{\conv}{\mathrm{conv}}

\newcommand{\area}{\mathrm{area}\,}

\newcommand{\vol}{\mathrm{vol}\,}
\newcommand{\cov}{\mathrm{cov}\,}

\newcommand{\deq}{\stackrel{D}{=}}
\newcommand{\ceq}{\stackrel{C}{=}}
\newcommand{\cdeq}{\stackrel{CD}{=}}
\newcommand{\E}{\mathbb E}
\newcommand{\ind}{\mathbbm 1}

\newtheorem{lm}{Lemma}
\newtheorem{tm}{Theorem}
\newtheorem{cor}{Corollary}
\newtheorem*{rem}{Remark}

\begin{document}
\title{On integer points inside a randomly shifted polyhedron}
\date{}
\author{Aleksandr Tokmachev\thanks{The work was supported by the Theoretical Physics and Mathematics Advancement Foundation «BASIS».}}
\maketitle
\begin{abstract}
Consider a convex body $C \subset \mathbb{R}^d$. Let $X$ be a random point with uniform distribution in $[0,1]^d$. Define $X_C$ as the number of lattice points in $\mathbb{Z}^d$ inside the translated body $C + X$. It is well known that $\mathbb{E} X_C = \mathrm{vol}(C)$.
A natural question arises: What can be said about the distribution of $X_C$ in general? In this work, we study this question when $C$ is a polyhedron with vertices at integer points.
\end{abstract}
\section{Introduction}
Consider the $d$-dimensional space $\mathbb R^d$ and the lattice $\mathbb Z^d$. Let $C$ be a convex body in $\mathbb R^d$, i.e., a convex compact set with non-empty interior. Let $X$ be a random point uniformly distributed inside the unit cube $[0,1]^d$. We denote by $X_C$ the number of lattice points inside $C + X$:
\begin{align}
    X_C = |(C + X) \cap \Z^d|, \nonumber
\end{align}
where $X$ is a random point uniformly distributed in $[0,1]^d$. It is known that the expectation of $X_C$ equals the volume of $C$:
\begin{align}
    \mathbb E X_C = \vol (C).
\end{align}

A natural question arises: what can be said about the variance of $X_C$? In the case when $C$ is a disk of large radius, this question is closely related to the \textit{circle problem}, which concerns finding the number of lattice points in a disk $rB^2$ of radius $r$ centered at the origin. For more details on this connection and possible generalizations, see \cite{Strombergsson}. Additionally, we should mention the paper \cite{Janacek}, which studies the asymptotics of the variance for $rC$ when the body $C$ differs from a ball.

In one of the author's works \cite{Myself}, the two-dimensional case of this problem was considered for integer polygons, i.e., polygons whose vertices all have integer coordinates. This paper provided an explicit formula for the variance, as well as computed the covariance of any two polygons. It is worth noting that both formulas have a simple form and express the characteristics in terms of the number of points on the sides. 

In this paper, we present generalizations of several two-dimensional results to higher dimensions, as well as provide counterexamples to other statements.

\section{Main results}
We will begin by establishing the notations that will be employed throughout this paper.

\textbf{Notations.}
\begin{itemize}
\item An \textit{integer point} is defined as a point whose coordinates are integers. Similarly, an \textit{integer vector or polyhedron} is defined as a vector or polyhedron with integer vertices. We identify integer vectors and integer points as follows: $m \leftrightarrow \overrightarrow{0m}$.

\item We use the standard notation $X \deq Y$ for identically distributed random variables and use $X \ceq Y$ for random variables such that $X - \E X = Y - \E Y $ a.s. Note that $X \ceq Y$ iff exist a constant $C$ such that $X = Y + C$ a.s. We also write $X \cdeq Y$ if $X - \E X \deq Y - \E Y$. 

\item For two sets $A$ and $B$ we denote by $A \oplus B$ the Minkowski sum of these sets:
$$  
    A \oplus B = \{a + b \colon a \in A,~b \in B\}.
$$


\item In the context of a polygon, the term ''sides'' is used to refer to a set of vectors representing the counterclockwise--oriented sides of the polygon. We call the \textit{affine length} of a side of an integer polygon the number of segments into which integer points divide that side.
\end{itemize}

Let us recall the key properties of quantities generated by two-dimensional polygons (for proofs, see \cite{Myself}).
\begin{st}\label{baza} Let an integer polygon $P$ with sides $v_1, v_2, \ldots v_n$ be given. Then:
    \begin{enumerate}
    
        \item $\E X_P = \area (P)$.

        \item For any $A \in SL(2, \Z)$, $$X_{AP} \deq X_P.$$

        \item $X_{-P} \deq X_P$.

        \item $X_P \ceq  X_{v_1} + X_{v_2} + \ldots + X_{v_n}.$

        \item For any integer polygons $P$ and $Q$,
        $$
            X_{P \oplus Q} \ceq X_P + X_Q, 
        $$
        where $P \oplus Q$ is a Minkowski sum of $P$ and $Q$.

         \item If $P$ is centrally symmetric, then $X_P = const$ a.s.

        \item $X_P \cdeq -X_p$, i.e. the random variable $X_P - \E X_P$ has a symmetric distribution. 
        
    \end{enumerate}
\end{st}

The first three properties remain valid for polyhedra of arbitrary dimension.
\begin{st}\label{baza_dim}
    Let P be an integer polyhedron. Then
    \begin{enumerate}
    
        \item $\E X_P = \vol (P)$.

        \item For any $A \in SL(d, \Z)$, $$X_{AP} \deq X_P.$$

        \item $X_{-P} \deq X_P$.
        
    \end{enumerate}
\end{st}

Property 4 was auxiliary in the two-dimensional case and has no analogue in higher dimensions. Moreover, Properties 5--7 fail already in dimension 3. Counterexamples for these properties are described in Section \ref{sec_Counterexamples}. Nevertheless, Property 6 has an analogue in arbitrary dimension. Before stating it, let us recall that a \textit{zonotope} is a polyhedron which is the Minkowski sum of several line segments.
\begin{st}\label{st_Zonotope}
    Let $P$ be an integer zonotope. Then $X _P = const$ a.s.
\end{st}
Note that in the two-dimensional case, the class of zonotopes coincides with the class of centrally symmetric polygons. Therefore, this result includes Property 6 from the two-dimensional case. The proof of Statement \ref{st_Zonotope} can be found in Section \ref{sec_Zonotope}.

\vspace{0,5 cm}
An important consequence of the properties of quantities generated by two-dimensional integer polygons is the behavior of the quantity's distribution under polyhedron scaling. Specifically, the following corollary holds:
\begin{cor}\label{cor_Scalling}
    Let an integer polygon $P$ be given. Then $X_{nP} \ceq nX_P$ and $\D X_{nP} = n^2\D X_P$.
\end{cor}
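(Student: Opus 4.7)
The plan is to deduce this corollary directly from Property 5 of Statement \ref{baza} by induction on $n$, using the identity $nP = P \oplus P \oplus \ldots \oplus P$ ($n$ copies), which holds for any convex body.

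First I would note the base case $n = 1$ is trivial. For the inductive step, assuming $X_{kP} \ceq k X_P$, I would write $(k+1)P = kP \oplus P$ and apply Property 5 to obtain
$$
X_{(k+1)P} \ceq X_{kP} + X_P \ceq k X_P + X_P = (k+1) X_P,
$$
using that the relation $\ceq$ is transitive and compatible with addition of random variables. The subtle but essential point — and the only one worth flagging — is that all the ''copies'' of $X_P$ appearing in the iterated sum refer to the \emph{same} random variable (built from the single shift $X$), so their sum is literally $n X_P$ rather than a sum of $n$ independent copies; there is no loss of information because $\ceq$ only asserts equality of centered versions.

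Finally, for the variance statement, I would invoke the general fact that if $Y \ceq Z$ then $Y$ and $Z$ differ by a deterministic constant, hence $\Var Y = \Var Z$. Applying this with $Y = X_{nP}$ and $Z = nX_P$ yields $\Var X_{nP} = \Var(n X_P) = n^2 \Var X_P$.

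There is no real obstacle: the corollary is a direct consequence of the Minkowski-additivity property. The only thing to be careful about is interpreting Property 5 correctly (same shift on both sides), which is exactly what makes $X_{nP}$ concentrate as $n X_P$ rather than behave like a sum of independent terms.
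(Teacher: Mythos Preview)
Your argument is correct and matches the paper's own treatment: the paper does not give a detailed proof of this corollary but simply notes that it ``relies on Property~5 of the two-dimensional case,'' and your induction via $nP = (n-1)P \oplus P$ is precisely how one unwinds that dependence. Your remarks about the same shift $X$ being used throughout and about $\ceq$ implying equality of variances are the right sanity checks.
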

This corollary relies on Property 5 of the two-dimensional case, which fails in higher dimensions. This raises the question about the distributional behavior of the random variable generated by a polyhedron under scaling. Our following theorems provide the answer to this question. We first state the result for integer simplices.
\begin{tm}\label{tm_Scaling_simplex}
    Let $T$ be an integer simplex in $\R^d$. Then, there exist integer polyhedra $P_1, P_2, \ldots, P_d$, such that $P_1 = T$, $P_k = -P_{d + 1 - k}$ and for any  positive integer $n$
    \begin{align}
        X_{nT} = \sum_{k = 1}^d \binom{n - k + d}{d} X_{P_k} \quad a.s.
    \end{align}       
Moreover, $X_{P_1} + X_{P_2} + \ldots + X_{P_d} = const$.
\end{tm}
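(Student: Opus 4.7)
After an integer translation I may assume that $0$ is a vertex of $T$, so $T = \conv(0, w_1, \ldots, w_d)$ with $w_i \in \Z^d$; let $W$ be the matrix with columns $w_1, \ldots, w_d$, and set $D := |\det W| = d!\vol(T)$, so that $T = W\Delta$ where $\Delta = \conv(0, e_1, \ldots, e_d)$ is the standard simplex. My plan is to give an explicit construction of each $P_k$ as a linear image of a hypersimplex layer of the unit cube, and to verify the identity via a coset count. A lattice point $m \in \Z^d$ lies in $nT + X$ iff $\mu := W^{-1}(m - X)$ lies in $n\Delta$, i.e., $\mu \geq 0$ componentwise and $\mathbf{1}^\top \mu \leq n$. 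Passing to the refined lattice $L^* := W^{-1}\Z^d$ (which contains $\Z^d$ with index $D$) and decomposing $L^*$ into cosets of $\Z^d$, I count solutions coset by coset: for almost every $X$ each coset $\ell$ contributes $\binom{n + N_\ell(X) + d}{d}$, with integer offset $N_\ell(X) \in \{-d, \ldots, -1\}$ depending on $\ell$ and $X$ but not on $n$. Summing shows that $X_{nT}$ is a.s.\ a polynomial in $n$ of degree $d$ with leading coefficient $D/d! = \vol(T)$; and since $X_{0 \cdot T} = 0$ a.s., this polynomial vanishes at $n = 0$ and expands uniquely in the basis $\binom{n + d - k}{d}$, $k = 1, \ldots, d$, as $X_{nT} = \sum_{k=1}^d \binom{n + d - k}{d}\, c_k$, where a direct identification gives $c_k(X) = \#\{\ell \in L^*/\Z^d : N_\ell(X) = -k\}$.

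I then propose to define $P_k := W \cdot H_k$, where $H_k := \{\mu \in [0, 1]^d : k - 1 \leq \sum_i \mu_i \leq k\}$ is the $k$-th hypersimplex layer of the unit cube. The polytope $H_k$ is convex with vertices precisely the $0/1$-vectors having exactly $k - 1$ or $k$ ones, so $P_k$ is an integer polytope, and clearly $P_1 = W\Delta = T$. The key claim $X_{P_k} = c_k$ a.s.\ follows from the same coset bookkeeping: the number of $m \in \Z^d$ with $W^{-1}(m - X) \in H_k$ equals the number of cosets $\ell \in L^*/\Z^d$ whose unique representative in the shifted unit cube $X^* + [0, 1]^d$ (where $X^* = W^{-1}X$) has coordinate sum in $[k - 1, k]$, and a short fractional-part computation shows that this condition is equivalent a.s.\ to $N_\ell(X) = -k$. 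Carrying out this coset-to-fractional-part matching is the technical step I expect to require the most care.

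For the symmetry, the hypersimplex layers satisfy $H_{d+1-k} = \mathbf{1} - H_k$ under reflection through the cube center, hence $P_{d+1-k} = W\mathbf{1} - P_k$; since $W\mathbf{1} = w_1 + \cdots + w_d \in \Z^d$, this exhibits $P_{d+1-k}$ as an integer translate of $-P_k$, giving the desired identification $P_k = -P_{d+1-k}$ (and in particular $P_d$ is an integer translate of $-T$, consistent with $P_1 = T$). For the constancy, the layers $H_1, \ldots, H_d$ tile $[0, 1]^d$ up to measure zero, so $\sum_{k=1}^d X_{P_k}$ counts the points of $L^*$ in the shifted unit cube $X^* + [0, 1]^d$, which is exactly $D = d!\vol(T)$ almost surely (one point per coset).
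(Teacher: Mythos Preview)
Your construction of the $P_k$ as images under $W$ of the hypersimplex layers $H_k$ of the unit cube is exactly what the paper does (there the unit cube is sliced by the hyperplanes $\sum x_i = k$ and the pieces are carried to $T$-coordinates by the affine map sending $\Delta$ to $T$), and your symmetry and constancy arguments match the paper's essentially verbatim: $H_{d+1-k} = \mathbf{1} - H_k$ gives $P_{d+1-k}$ as an integer translate of $-P_k$, and the $P_k$ tile an integer parallelepiped, whose point count is a.s.\ constant.

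The genuine difference is in how the scaling identity $X_{nT} = \sum_k \binom{n-k+d}{d} X_{P_k}$ is established. The paper gives a purely geometric argument: it tiles $n\Delta$ by the integer translates $z + H_k$ with $z \in \Z^d_{\geq 0}$ and $\sum z_i \leq n-k$, counts that there are exactly $\binom{n-k+d}{d}$ such translates of each $H_k$, and then uses invariance of $X_{\,\cdot\,}$ under integer shifts to collapse all copies of $P_k$ into a single term. You instead partition the refined lattice $W^{-1}\Z^d$ into its $D$ cosets modulo $\Z^d$ and count, for each coset, the number of its points landing in $n\Delta + X^*$; the fractional-part computation you flag as the delicate step does go through (for the coset $r+\Z^d$ one finds $N_\ell = \lfloor \sum_i \{X^*_i - r_i\}\rfloor - d$, and the unique representative of that coset in $X^*+[0,1)^d$ lies in $X^*+H_k$ precisely when $N_\ell = -k$), so $c_k = X_{P_k}$ follows. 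The two arguments are dual---the paper partitions the body, you partition the lattice---and both are correct; the paper's route is shorter and avoids the fractional-part bookkeeping, while yours makes the Ehrhart-type polynomial structure explicit. One minor remark: your detour through ``$X_{nT}$ is a polynomial in $n$ vanishing at $n=0$, hence expands uniquely in the basis $\binom{n+d-k}{d}$'' is unnecessary, since grouping the cosets by the value of $-N_\ell$ already hands you the expansion with the explicit coefficients $c_k$.
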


An arbitrary convex polyhedron can be represented as a union of simplices, allowing us to extend the previous theorem to the case of integer polyhedra. To achieve this, we first observe that the definition of the random variable $X_C$ naturally extends to cases where $C$ is a union of several disjoint bodies. In this setting, it is clear that if $C = \cup_{i = 1}^n C_i$, then $X_C = \sum_{i = 1}^n X_{C_i}$.
\begin{tm}\label{tm_Scaling_polytope}
    Let $P$ be an integer polyhedron in $\R^d$. Then, there exist unions of integer polyhedra $P_1, P_2, \ldots, P_d$, such that $P_1 = P$, $P_k = -P_{d + 1 - k}$ and for any positive integer $n$
    \begin{align}
        X_{nP} = \sum_{k = 1}^d \binom{n - k + d}{d} X_{P_k} \quad a.s.
    \end{align}
Moreover, $X_{P_1} + X_{P_2} + \ldots + X_{P_d} = const$.
\end{tm}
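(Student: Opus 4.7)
The natural strategy is to reduce to Theorem~\ref{tm_Scaling_simplex} by triangulating $P$ into integer simplices and then combining the simplex-level decompositions. First, I would fix a triangulation $P = T_1 \cup \cdots \cup T_m$ into integer simplices with pairwise disjoint interiors; such a triangulation always exists, for instance the pulling triangulation from any integer vertex of $P$ uses only integer simplices. Scaling by $n$ gives a triangulation $nP = nT_1 \cup \cdots \cup nT_m$ of $nP$ into integer simplices. The pairwise intersections $nT_i \cap nT_j$ lie in finitely many affine subspaces of dimension $<d$, hence have Lebesgue measure zero; consequently, with probability one no lattice point of $nP + X$ lands on a shared face, so $X_{nP} = \sum_{i=1}^m X_{nT_i}$ almost surely.

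Next, apply Theorem~\ref{tm_Scaling_simplex} to each simplex $T_i$ to obtain integer polyhedra $P_1^{(i)}, \ldots, P_d^{(i)}$ with $P_1^{(i)} = T_i$, $P_k^{(i)} = -P_{d+1-k}^{(i)}$, $\sum_{k=1}^d X_{P_k^{(i)}} = c_i$ a.s.\ for some constant $c_i$, and
\begin{align*}
    X_{nT_i} = \sum_{k=1}^d \binom{n-k+d}{d} X_{P_k^{(i)}} \quad \text{a.s.}
\end{align*}
Define $P_k$ to be the union of the polyhedra $P_k^{(1)}, \ldots, P_k^{(m)}$, interpreted as a union of several bodies as described just before the theorem, so that $X_{P_k} = \sum_{i=1}^m X_{P_k^{(i)}}$ by additivity. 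Then $P_1 = \bigcup_i T_i = P$ is precisely the triangulation, while $P_k = \bigcup_i P_k^{(i)} = \bigcup_i (-P_{d+1-k}^{(i)}) = -P_{d+1-k}$ termwise. Swapping the order of summation,
\begin{align*}
    X_{nP} = \sum_{i=1}^m X_{nT_i} = \sum_{i=1}^m \sum_{k=1}^d \binom{n-k+d}{d} X_{P_k^{(i)}} = \sum_{k=1}^d \binom{n-k+d}{d} X_{P_k} \quad \text{a.s.},
\end{align*}
and $\sum_{k=1}^d X_{P_k} = \sum_i c_i$ is a.s.\ a constant, as required.

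The main obstacle is really organizational rather than conceptual: one must verify that an integer triangulation of $P$ exists (classical, via pulling from an integer vertex) and that the measure-zero overlaps between the simplices in the triangulation do not spoil the a.s.\ additivity of $X_{\cdot}$ (immediate from $X$ being absolutely continuous). Once these are in hand, everything else is bookkeeping that piggybacks on Theorem~\ref{tm_Scaling_simplex}.
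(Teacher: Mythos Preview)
Your proposal is correct and matches the paper's approach exactly: the paper's proof is the single sentence ``Theorem~\ref{tm_Scaling_polytope} follows directly from Theorem~\ref{tm_Scaling_simplex} and the fact that any convex polyhedron can be partitioned into simplices,'' and you have simply written out the bookkeeping behind that sentence. Your added remarks about the existence of an integer triangulation and the a.s.\ additivity over measure-zero overlaps are the right details to check, and your interpretation of $P_k$ as a formal (disjoint) union of the $P_k^{(i)}$ is exactly the convention the paper sets up immediately before the theorem.
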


Let us note several corollaries of this theorem for the case of polyhedra in dimensions 3 and 4. Since the sum $X_{P_1} + X_{P_2} + \ldots + X_{P_d} = \text{const}$, we may subtract it from the right-hand side multiple times, causing the equality sign ``$=$'' to change to a sign ``$\ceq$''.
\begin{cor}\label{cor_3dim}
    Let $P \subset \R^3$ be an integer polyhedron. Then, for any positive integer $n$
    \begin{align}
        X_{nP} \ceq \frac{n(n+1)}{2}X_P - \frac{n(n-1)}{2}X_{-P}.
    \end{align}
\end{cor}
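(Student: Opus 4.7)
The plan is to specialize Theorem \ref{tm_Scaling_polytope} to $d = 3$ and then eliminate the middle term using the constant-sum relation. Since $d = 3$, Theorem \ref{tm_Scaling_polytope} yields unions of integer polyhedra $P_1, P_2, P_3$ with $P_1 = P$, and the symmetry $P_k = -P_{4-k}$ forcing $P_3 = -P$ (the identity $P_2 = -P_2$ for the middle piece will not be needed in the computation). Substituting $d = 3$ into the binomial formula gives
\begin{align}
    X_{nP} = \binom{n+2}{3} X_{P} + \binom{n+1}{3} X_{P_2} + \binom{n}{3} X_{-P} \quad \text{a.s.} \nonumber
\end{align}

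Next I would invoke the second conclusion of Theorem \ref{tm_Scaling_polytope}, namely $X_{P_1} + X_{P_2} + X_{P_3} = \mathrm{const}$, which gives $X_{P_2} \ceq -X_P - X_{-P}$. Substituting this into the displayed identity collapses the middle term and produces, up to an additive constant,
\begin{align}
    X_{nP} \ceq \left[\binom{n+2}{3} - \binom{n+1}{3}\right] X_P + \left[\binom{n}{3} - \binom{n+1}{3}\right] X_{-P}. \nonumber
\end{align}

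Finally I would apply Pascal's rule twice: $\binom{n+2}{3} - \binom{n+1}{3} = \binom{n+1}{2} = \tfrac{n(n+1)}{2}$ and $\binom{n+1}{3} - \binom{n}{3} = \binom{n}{2} = \tfrac{n(n-1)}{2}$. This yields exactly the claimed identity. There is no real obstacle here: the proof is essentially bookkeeping, and the only nontrivial input (construction of the $P_k$'s and the constancy of their sum) has been supplied by Theorem \ref{tm_Scaling_polytope}.
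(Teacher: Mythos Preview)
Your proposal is correct and follows essentially the same approach as the paper: specialize Theorem~\ref{tm_Scaling_polytope} to $d=3$, use the constancy of $X_{P_1}+X_{P_2}+X_{P_3}$ to eliminate the $X_{P_2}$ term, and then simplify the resulting binomial differences via Pascal's rule. The only cosmetic difference is that the paper phrases the elimination as ``subtracting the constant sum $\binom{n+1}{3}$ times'' rather than substituting $X_{P_2}\ceq -X_P-X_{-P}$, but this is the same computation.
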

In particular, when $P$ is a centrally symmetric polygon, the behavior of the random variable follows the same pattern as in the planar case.
\begin{cor}\label{cor_3dim_sym}
    Let $P \subset \R^3$ be a centrally symmetric polyhedron. Then, $X_{nP} \ceq nX_P$ and $\D X_{nP} = n^2 \D X_P$ for any positive integer $n$.
\end{cor}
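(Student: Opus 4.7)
The plan is to derive Corollary \ref{cor_3dim_sym} directly from Corollary \ref{cor_3dim} by showing that $X_{-P}$ and $X_P$ coincide almost surely when $P$ is a centrally symmetric integer polyhedron.

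The first step is to pin down the center of symmetry. If $P$ is centrally symmetric with center $c$, then for every vertex $v$ of $P$ the point $2c - v$ is also a vertex. Writing $2c = v + (2c - v)$ as a sum of two integer vectors shows that $2c \in \Z^3$. Hence $-P = P - 2c$ differs from $P$ by an integer translation.

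Next, I would observe that translating a body by an integer vector does not change the counting random variable pointwise: for any $w \in \Z^d$ and any set $C$,
\begin{align*}
X_{C + w} = |(C + w + X) \cap \Z^d| = |(C + X) \cap \Z^d| = X_C \quad \text{a.s.},
\end{align*}
since $w + \Z^d = \Z^d$. Applying this with $C = -P$ and $w = 2c$ gives $X_{-P} = X_{P - 2c + 2c} = X_P$ almost surely. Now substituting into Corollary \ref{cor_3dim},
\begin{align*}
X_{nP} \ceq \tfrac{n(n+1)}{2}X_P - \tfrac{n(n-1)}{2}X_{-P} = \tfrac{n(n+1) - n(n-1)}{2}X_P = nX_P.
\end{align*}
The variance formula follows immediately: the relation $X_{nP} \ceq nX_P$ means $X_{nP} - \E X_{nP} = nX_P - n \E X_P$ almost surely, so $\D X_{nP} = \D(nX_P) = n^2 \D X_P$.

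There is essentially no obstacle here beyond the half-integer center observation; once $-P$ is recognized as an integer translate of $P$, the coefficients in Corollary \ref{cor_3dim} collapse from $\tfrac{n(n+1)}{2} - \tfrac{n(n-1)}{2}$ to $n$, exactly reproducing the planar scaling behavior.
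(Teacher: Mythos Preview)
Your proof is correct and follows exactly the same approach as the paper: apply Corollary~\ref{cor_3dim} and use $X_{-P}=X_P$ to collapse the coefficients $\tfrac{n(n+1)}{2}-\tfrac{n(n-1)}{2}=n$. The only difference is that the paper simply asserts $X_P=X_{-P}$ for centrally symmetric $P$, whereas you supply the justification (the center $c$ satisfies $2c\in\Z^3$, so $-P$ is an integer translate of $P$); this is a welcome detail but not a different route.
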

Similar arguments yield the result for 4-dimensional polyhedra.
\begin{cor}\label{cor_4dim_sym}
    Let $P \subset \R^4$ be a centrally symmetric polyhedron. Then, $X_{nP} \ceq n^2X_P$ and $\D X_{nP} = n^4 \D X_P$ for any positive integer $n$.
\end{cor}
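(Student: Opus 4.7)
Apply Theorem~\ref{tm_Scaling_polytope} with $d=4$ to obtain unions of integer polyhedra $P_1,P_2,P_3,P_4$ with $P_1=P$, $P_4=-P$, $P_3=-P_2$, $X_{P_1}+X_{P_2}+X_{P_3}+X_{P_4}=\text{const}$, and
$$
X_{nP}=\sum_{k=1}^{4}\binom{n-k+4}{4}X_{P_k}\quad\text{a.s.}
$$
for every positive integer $n$. Since $P$ is centrally symmetric with integer vertices, the sum of two opposite vertices equals $2c\in\Z^4$, so $-P$ is an integer translate of $P$; consequently $X_{-mP}=X_{mP}$ a.s.\ for every $m\ge 1$, and in particular $X_{P_4}=X_{-P}=X_P$ a.s.

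The key step is $X_{P_2}=X_{P_3}$ a.s. For any bounded $A\subset\R^4$, a direct lattice calculation (translate by $\mathbf 1=(1,1,1,1)\in\Z^4$ and invert $m\mapsto-m$) gives the a.s.\ equality
$$
\bigl|(A+\mathbf 1-X)\cap\Z^4\bigr|=\bigl|(-A+X)\cap\Z^4\bigr|,
$$
where $X$ denotes the uniform point in $[0,1]^4$. Specialising to $A=P_k$ and using $-P_k=P_{5-k}$ shows that replacing $X$ by $\mathbf 1-X$ sends $X_{P_k}$ to $X_{P_{5-k}}$; specialising to $A=nP$, combined with the integer-translate observation above, shows that the same replacement fixes $X_{nP}$. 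Substituting $X\mapsto\mathbf 1-X$ in the scaling identity and subtracting from the original therefore yields
$$
\sum_{k=1}^{4}\binom{n-k+4}{4}\bigl[X_{P_k}-X_{P_{5-k}}\bigr]=0\quad\text{a.s., for every }n\ge 1.
$$
Intersecting the countably many full-measure events gives a polynomial identity in $n$ on a single event of probability one. The four polynomials $\binom{n-k+4}{4}$ are linearly independent (their values at $n=1,2,3,4$ form a unit-lower-triangular matrix), so every coefficient must vanish, yielding $X_{P_k}=X_{P_{5-k}}$ a.s.; in particular $X_{P_2}=X_{P_3}$.

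Combined with $X_{P_1}=X_{P_4}=X_P$ and the constant-sum constraint, this forces $X_{P_2}=X_{P_3}\ceq -X_P$. Substituting into the scaling identity gives
$$
X_{nP}\ceq\Bigl[\binom{n+3}{4}+\binom{n}{4}-\binom{n+2}{4}-\binom{n+1}{4}\Bigr]X_P,
$$
and a short direct calculation simplifies the bracket to $n^2$. Hence $X_{nP}\ceq n^2X_P$, and $\D X_{nP}=n^4\D X_P$ follows because the two variables differ only by a deterministic constant.

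The main obstacle is promoting the obvious distributional equality $X_{P_2}\deq X_{P_3}$ (from $P_3=-P_2$) to the almost-sure equality needed to eliminate the residual $X_{P_2}-X_{P_3}$ from the final linear combination; the constant-sum constraint alone is insufficient for this. The reflection $X\mapsto\mathbf 1-X$ together with the linear independence of the four binomial polynomials is exactly what closes the gap.
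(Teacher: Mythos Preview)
Your proof is correct and follows the paper's route: apply Theorem~\ref{tm_Scaling_polytope} in dimension $4$, use the constant-sum relation together with $X_{P_1}=X_{P_4}=X_P$ and $X_{P_2}=X_{P_3}$, and reduce the resulting binomial combination to $n^2$. The one substantive difference is that the paper passes from $X_{-P_2}$ to $X_{P_2}$ in a single line, citing only ``the symmetry condition,'' whereas you actually supply an argument: the reflection $X\mapsto\mathbf 1-X$ sends $X_{P_k}$ to $X_{P_{5-k}}$ while fixing $X_{nP}$ (via $-nP=nP-2nc$ with $2c\in\Z^4$), and linear independence of the polynomials $\binom{n-k+4}{4}$ then forces $X_{P_k}=X_{P_{5-k}}$ almost surely. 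Since central symmetry of $P$ does not by itself make $-P_2$ an integer translate of $P_2$, your reflection-plus-independence step justifies a point the paper leaves implicit.
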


Complete proofs of the theorems and their corollaries can be found in Section \ref{sec_Scaling}.

Let us return to the original question about the variance of the quantity generated by an integer polyhedron. In dimension 2, the variance of a polygon without parallel sides was equal to the sum of squared affine lengths of its sides divided by 12. In other words, the variance depended only on the combinatorial structure of the polygon: knowing the number of points on the sides was sufficient to compute the variance of the corresponding random variable. This raises the question of whether the variance of a polyhedron is determined by its combinatorial structure. The answer is negative, and a counterexample is provided by the so-called Reeve tetrahedron \cite{Reeve}: a tetrahedron with vertices at $(0,0,0)$, $(0,1,0)$, $(1,0,0)$, and $(1,1,n)$. Note that all tetrahedra of this form contain no integer points in their interior, on their faces, or edges. However, it can be easily shown that for large $n$, a translated Reeve tetrahedron may contain arbitrarily many integer points, and therefore we cannot expect the variances to be equal. The explicit form of variances for such tetrahedra is described in the following statement.
\begin{tm}
    Let \( T_n \) be the simplex with vertices \((0,0,0)\), \((0,1,0)\), \((1,0,0)\), and \((1,1,n)\). Then its variance is given by:
    \begin{align}
        \Var X_{T_n} =  \frac{n^3 + 12n - 3}{72n}.
    \end{align}
\end{tm}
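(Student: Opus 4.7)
The plan is to reduce the three-dimensional counting problem to a one-dimensional integer-counting one and then apply the law of total variance. The horizontal cross-section of $T_n$ at height $z \in [0, n]$ is the right triangle $\Delta_{z/n} = \{(x, y) : x, y \ge z/n,\ x + y \le 1 + z/n\}$, which sits inside the square $[z/n, 1]^2$ of side $1 - z/n < 1$. Translation by $(X_1, X_2) \in [0, 1)^2$ preserves this sub-unit side, so the translated cross-section contains at most one integer in each coordinate, and checking the three defining inequalities shows that the only possible lattice point is $(1, 1)$. Hence every lattice point of $T_n + X$ has the form $(1, 1, m_3)$; writing out the four half-space inequalities of $T_n$ at $(1 - X_1, 1 - X_2, m_3 - X_3)$ converts the membership condition into $m_3 \in [nL + X_3,\ nU + X_3]$ with $L := \max(0, 1 - X_1 - X_2)$ and $U := 1 - \max(X_1, X_2)$, and so
\[
X_{T_n} \;=\; \#\{m_3 \in \Z : nL + X_3 \le m_3 \le nU + X_3\} \quad \text{a.s.}
\]

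Next I would condition on $(X_1, X_2)$. Using the classical fact that for $X_3$ uniform on $[0, 1)$ the number of integers in an interval of length $\ell$ with a uniform random translate has mean $\ell$ and variance $\{\ell\}(1 - \{\ell\})$, setting $W := U - L$ gives $\E[X_{T_n} \mid X_1, X_2] = n W$ and $\Var(X_{T_n} \mid X_1, X_2) = \{n W\}(1 - \{n W\})$. The total-variance decomposition therefore reads
\[
\Var X_{T_n} \;=\; n^2 \Var W + \E\bigl[\{n W\}(1 - \{n W\})\bigr].
\]
A direct computation, splitting on the sign of $X_1 + X_2 - 1$, shows that $W$ has density $4 - 8w$ on $[0, 1/2]$, so $\E W = 1/6$ (which correctly recovers $\E X_{T_n} = n/6 = \vol(T_n)$) and $\Var W = 1/72$, contributing $n^2 / 72$ to the variance.

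It remains to evaluate $\E[\{nW\}(1 - \{nW\})]$. Substituting $u = nW$ turns it into the integral of $\{u\}(1 - \{u\})$ against the linear density $(4 - 8u/n)/n$ over $[0, n/2]$; splitting across the unit windows $[k, k+1]$ reduces each complete window to the elementary integrals $\int_0^1 t(1-t)\,dt = 1/6$ and $\int_0^1 t^2(1-t)\,dt = 1/12$ weighted by linear factors in $k$. The main obstacle will be the partial window of length $1/2$ appearing at the right endpoint $u = n/2$ precisely when $n$ is odd: the cases $n$ even and $n$ odd have to be carried out separately and then reconciled into the single closed form $\frac{n^3 + 12 n - 3}{72 n}$, which together with the $n^2/72$ contribution yields the claimed variance.
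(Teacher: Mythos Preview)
Your approach is sound and genuinely different from the paper's: where the paper conditions on the vertical coordinate and assembles the covariance matrix of the layer indicators $I_k$, you condition on the horizontal shift $(X_1,X_2)$ and reduce everything to counting integers in a randomly translated interval of length $nW$. The identification of the lattice points as $(1,1,m_3)$, the interval $[nL+X_3,\,nU+X_3]$, the density $4-8w$ for $W$ on $[0,\tfrac12]$, and the decomposition $\Var X_{T_n}=n^{2}/72+\E\bigl[\{nW\}(1-\{nW\})\bigr]$ are all correct and make the computation cleaner than the double sum in the paper.

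The place where your plan breaks is the very last sentence. If you actually carry out the window-by-window integration you describe, you get
\[
\E\bigl[\{nW\}(1-\{nW\})\bigr]=
\begin{cases}\dfrac{1}{6}, & n\ \text{even},\\[2mm]\dfrac{1}{6}-\dfrac{1}{24n^{2}}, & n\ \text{odd},\end{cases}
\qquad
\Var X_{T_n}=
\begin{cases}\dfrac{n^{2}+12}{72}, & n\ \text{even},\\[2mm]\dfrac{n^{4}+12n^{2}-3}{72\,n^{2}}, & n\ \text{odd}.\end{cases}
\]
These do \emph{not} merge into the single expression $(n^{3}+12n-3)/(72n)$; the agreement at $n=1$ is accidental. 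Already at $n=2$ one checks directly that the two layer indicators satisfy $\E I_{1}I_{2}=0$, hence $\Var X_{T_2}=\E X_{T_2}-(\E X_{T_2})^{2}=\tfrac13-\tfrac19=\tfrac29$, while the stated formula gives $29/144$. The paper's own penultimate expression, $2\sum_{k<l\le (n+k)/2}\E I_kI_l+n/6-n^{2}/36$, also evaluates to $2/9$ at $n=2$ (and to $31/108$ rather than $5/18$ at $n=3$), so the discrepancy comes from the paper's final algebraic simplification, not from your method. Your argument is correct; it simply proves a different (and correct) formula from the one in the statement.
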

The proof of this formula appears in Section~\ref{sec_Reeve}.

\section{Scaling}\label{sec_Scaling}
In this section we prove results related to scaling: Theorems~\ref{tm_Scaling_simplex} and~\ref{tm_Scaling_polytope}, along with their corollaries.

We say that polyhedra $P_1, P_2, \ldots, P_n$ form a partition of a polyhedron $P$ if $P = \bigcup_{i=1}^n P_i$ and $\text{int}(P_i \cap P_j) = \varnothing$ for any $i \ne j$. Note that if $P_1, P_2, \ldots, P_n$ is a partition of $P$, then $X_P = X_{P_1} + X_{P_2} + \ldots + X_{P_n}$ almost surely. Indeed, if some integer point lies simultaneously inside both $P_i$ and $P_j$, then it belongs to their intersection, and the probability of such an event is zero. We call a polyhedron $P'$ a copy of $P$ if $P'$ is obtained from $P$ by parallel translation. The key idea for proving the theorems is the following lemma.
\begin{lm}\label{lm_Scaling_symplex}
    For any simplex $T \subset \mathbb{R}^d$ (not necessarily integer), there exists a collection of $d$ convex polyhedra $P_1, P_2, \ldots, P_d$ such that:
    \begin{enumerate}
        \item $P_1 = T$ and $P_k = -P_{d + 1 - k}$ for all $k$;
        \item For any positive integer $n$, the scaled simplex $nT$ can be partitioned into copies of the polyhedra $P_1, P_2, \ldots, P_d$;
        \item Each polyhedron $P_k$ appears in the partition exactly $\binom{n - k + d}{d}$ times.
    \end{enumerate}
\end{lm}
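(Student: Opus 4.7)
The plan is to first establish the decomposition for the standard simplex $T_0 = \conv(0, e_1, \ldots, e_d)$, and then transfer to an arbitrary simplex $T$ by an affine change of coordinates. For the standard simplex, I define the hypersimplex layers of the unit cube
\[
H_k := \{x \in [0,1]^d : k - 1 \leq x_1 + \cdots + x_d \leq k\}, \quad k = 1, 2, \ldots, d.
\]
These are $d$ convex polytopes partitioning $[0,1]^d$, with $H_1 = T_0$ and with the symmetry $H_{d+1-k} = \mathbf{1} - H_k$ (where $\mathbf{1} = (1, \ldots, 1)$), so that $H_{d+1-k}$ is a translate of $-H_k$.

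The key observation is that the family of hyperplanes $\{x_i = m\}$ and $\{x_1 + \cdots + x_d = m\}$ for $m \in \Z$ is compatible with the boundary of $n T_0$, whose facets lie on $x_i = 0$ and $x_1 + \cdots + x_d = n$. Within each integer-shifted cube $[a, a + \mathbf{1}]$ these hyperplanes slice out the layers $H_k + a$, and the integrality of $n$ and of $a$ rules out the possibility of a layer being cut in its interior by $\{x_1 + \cdots + x_d = n\}$. Hence the induced partition of $nT_0$ consists exactly of the cells $H_k + a$ with $a \in \Z_{\geq 0}^d$ and $a_1 + \cdots + a_d + k \leq n$. By stars-and-bars, the number of such $a$ for fixed $k$ equals $\binom{n - k + d}{d}$, which proves the lemma for $T = T_0$.

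For an arbitrary simplex $T$ with vertices $v_0, v_1, \ldots, v_d$, I would write $T = v_0 + A T_0$ with $A e_i = v_i - v_0$ and push the partition of $n T_0$ forward by the affine map $x \mapsto n v_0 + A x$, obtaining a partition of $nT$ into translates of the convex polyhedra $A H_k$. It remains to choose translation vectors $t_k$ so that $P_k := A H_k + t_k$ satisfies the two conditions of the lemma. The requirement $P_1 = T$ forces $t_1 = v_0$, and, using $H_{d+1-k} = \mathbf{1} - H_k$, the requirement $P_k = -P_{d+1-k}$ reduces to the linear system $t_k + t_{d+1-k} = -A\mathbf{1}$. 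This is consistent with $t_1 = v_0$ (giving $t_d = -v_0 - A\mathbf{1}$) and can be completed, for instance, by the interpolation $t_k := \tfrac{d-k}{d-1}\, v_0 + \tfrac{k-1}{d-1}(-v_0 - A\mathbf{1})$ for $d \geq 2$.

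I do not anticipate any deep obstacle. The combinatorial count $\binom{n - k + d}{d}$, the scaling property for every positive integer $n$, and the symmetry $P_k = -P_{d+1-k}$ all follow by construction. The one step that genuinely requires care is the bookkeeping for the partition of $nT_0$: one must verify that every point of $nT_0$ lies in exactly one translate $H_k + a$ with $\sum_i a_i + k \leq n$, and that no cube straddling the boundary facet $\{x_1 + \cdots + x_d = n\}$ contributes any additional cells. Both of these facts reduce to the remark that the dividing hyperplanes are compatible with the facets of $nT_0$.
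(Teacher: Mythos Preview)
Your approach is essentially identical to the paper's: both reduce to the standard simplex, slice the unit cube into the hypersimplices $H_k=\{x\in[0,1]^d: k-1\le \sum x_i\le k\}$, partition $nT_0$ by the hyperplanes $\{x_i=m\}$ and $\{\sum x_i=m\}$ for $m\in\Z$, and count the copies via stars-and-bars. If anything, you are more explicit than the paper in solving for translation vectors $t_k$ that make $P_1=T$ and $P_k=-P_{d+1-k}$ hold exactly; the paper only observes the central symmetry of $H_k$ and $H_{d+1-k}$ about the cube's center and leaves the choice of translates implicit.
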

\begin{proof}
Since any simplex can be obtained from any other simplex by an affine transformation, it suffices to prove the statement for the simplex $\Delta$ formed by the basis vectors:
\begin{align}
    \Delta = \{\bx = (x_1, x_2, \ldots, x_d) \in \mathbb{R}^d \colon x_1 + x_2 + \ldots + x_d \leqslant 1, x_i \geqslant 0\}.
\end{align}

We explicitly construct the required collection of polyhedra $P_1, P_2, \ldots, P_d$. To do this, we partition the unit cube $[0,1]^d$ by hyperplanes of the form $x_1 + x_2 + \ldots + x_d = k$ for $k = 1, 2, \ldots, d-1$. The resulting polyhedra have the following form:
\begin{align}
    P_i = \{ \bx = (x_1, x_2, \ldots, x_d) \colon i-1 \leqslant x_1 + x_2 + \ldots + x_d \leqslant i \}.
\end{align}
We show that copies of these polyhedra satisfy our requirements. First, by construction, the resulting polyhedra are convex. Second, the polyhedra $P_k$ and $P_{d+1-k}$ are symmetric with respect to the point $(\frac{1}{2}, \frac{1}{2}, \ldots, \frac{1}{2})$, and therefore can be translated to satisfy the condition $P_k = -P_{d+1-k}$.

We now demonstrate that any simplex of the form $n\Delta$ can be partitioned into copies of these polyhedra. We begin by partitioning the entire space $\mathbb{R}^d$ into unit cubes, and then further into copies of our polyhedra using hyperplanes of the form $\sum_{i=1}^d x_i = k$ for $k \in \mathbb{Z}$. Note that the hyperplanes containing the facets of $n\Delta$ participate in this dissection, and consequently the simplex itself is divided into polyhedra of the required type.

It remains to determine the number of copies of each polyhedron $P_k$. Observe that all copies of $P_k$ have the form $\bz + P_k$ for some $\bz = (z_1, z_2, \ldots, z_d) \in \mathbb{Z}^d$. For the copy $\bz + P_k$ to lie within the simplex $n\Delta$, it is necessary and sufficient that all $z_i \geqslant 0$ and $z_1 + z_2 + \ldots + z_d + k \leqslant n$. 

Thus, we need to count the number of non-negative integer solutions to the inequality $z_1 + z_2 + \ldots + z_d \leqslant n - k$. Introducing an auxiliary variable $z_0$, we equivalently count the number of non-negative integer solutions to the equation $z_0 + z_1 + z_2 + \ldots + z_d = n - k$. This count equals $\binom{n-k+d}{d}$, completing the proof.
\end{proof}

\begin{rem}\label{rem_Integer_translation}
    Note that the partition of the simplex $n\Delta$ constructed in the proof consists solely of integer polyhedra, with their copies differing from one another by integer vector translations. Consequently, for any integer simplex, the corresponding polyhedra will be integer polyhedra, and their copies will likewise differ by integer vectors. Moreover, the union of all constructed polyhedra yields the unit cube, which implies that for an integer simplex, the union will form an integer parallelepiped.
\end{rem}

We are now ready to prove Theorem~\ref{tm_Scaling_simplex}.

\begin{proof}[Proof of Theorem~\ref{tm_Scaling_simplex}]
Consider a simplex $T$. By Lemma~\ref{lm_Scaling_symplex}, there exist polyhedra $P_1, P_2, \ldots, P_d$ whose copies can tile any scaled simplex $nT$. Fix $n \in \mathbb{Z}_+$ and consider the partition of the simplex $nT$. Let $P_k^{(i)}$ denote the $i$-th copy of polyhedron $P_k$ for $i = 1, 2, \ldots, \binom{n-k+d}{d}$. Then
\begin{align}\label{eq_Saling_one}
    X_{nT} = \sum_{k=1}^d\sum_{i = 1}^{\binom{n-k+d}{d}} X_{P_k^{(i)}}.
\end{align}
    
According to Remark~\ref{rem_Integer_translation}, all $P_k^{(i)}$ are integer polyhedra, and any two copies $P_k^{(i)}$ and $P_k^{(j)}$ differ by an integer vector translation, hence $X_{P_k^{(i)}} = X_{P_k^{(j)}}$. Therefore, equality~\ref{eq_Saling_one} can be rewritten in the required form:
\begin{align}
    X_{nT} = \sum_{k=1}^d \binom{n-k+d}{d} X_{P_k}.
\end{align}

To complete the proof, it remains to verify that $X_{P_1} + X_{P_2} + \ldots + X_{P_d} = \text{const}$. By Remark~\ref{rem_Integer_translation}, the collection of polyhedra $P_1, P_2, \ldots, P_d$ forms a partition of an integer parallelepiped. Finally, observe that an integer parallelepiped always contains the same number of integer points, and
\begin{align}
    X_{P_1} + X_{P_2} + \ldots + X_{P_d} = X_{\cup P_i} = \text{const}.
\end{align}
\end{proof}


Theorem~\ref{tm_Scaling_polytope} follows directly from Theorem~\ref{tm_Scaling_simplex} and the fact that any convex polyhedron can be partitioned into simplices.

We now proceed to derive corollaries from these theorems. First, consider the three-dimensional case. In this setting, the statement of Theorem~\ref{tm_Scaling_polytope} takes the following form:
\begin{align}\label{eq_Cor_one}
    X_{nP} = \binom{n+2}{3}X_{P_1} + \binom{n+1}{3} X_{P_2} + \binom{n}{3}X_{P_3} \nonumber\\= \binom{n+2}{3}X_P + \binom{n+1}{3} X_{P_2} + \binom{n}{3}X_{-P}.
\end{align}
Since $X_P + X_{P_2} + X_{-P} = \text{const}$, we can subtract this constant from both sides of equation (\ref{eq_Cor_one}), thereby eliminating the second term on the right-hand side and obtaining Corollary~\ref{cor_3dim}:
\begin{align}\label{eq_Cor_two}
    X_{nP} \ceq \left(\binom{n+2}{3} - \binom{n+1}{3}\right) X_P + \left( \binom{n}{3} - \binom{n+1}{3}\right)X_{-P}\nonumber\\
    =\binom{n+1}{2}X_P - \binom{n}{2}X_{-P} = \frac{n(n+1)}{2}X_P - \frac{n(n-1)}{2}X_{-P}.
\end{align}

Note that if the polyhedron $P$ is centrally symmetric, then $X_P = X_{-P}$, and equality (\ref{eq_Cor_two}) can be simplified as follows:
\begin{align}\label{eq_Cor_three}
    X_{nP} \ceq \frac{n(n+1)}{2}X_P - \frac{n(n-1)}{2}X_{-P} = \left(\frac{n(n+1)}{2} - \frac{n(n-1)}{2} \right) X_P = nX_P.
\end{align}

Thus, Corollary~\ref{cor_3dim_sym} is proved.

We now turn to the four-dimensional case for centrally symmetric polyhedra. According to Theorem~\ref{tm_Scaling_polytope} and the symmetry condition, we have:
\begin{align}
    X_{nP} = \binom{n+3}{4}X_{P_1} + \binom{n+2}{4}X_{P_2} + \binom{n+1}{4} X_{P_3} + \binom{n}{4} X_{P_4}\nonumber\\
    = \binom{n+3}{4}X_{P} + \binom{n+2}{4}X_{P_2} + \binom{n+1}{4} X_{-P_2} + \binom{n}{4} X_{-P}\nonumber\\
    =\binom{n+3}{4}X_{P} + \binom{n+2}{4}X_{P_2} + \binom{n+1}{4} X_{P_2} + \binom{n}{4} X_{P}\nonumber\\
    =\left( \binom{n+3}{4} + \binom{n}{4} \right)X_P + \left( \binom{n+2}{4} + \binom{n+1}{4} \right) X_{P_2}\nonumber\\
    \ceq \left( \binom{n+3}{4} + \binom{n}{4} - \binom{n+2}{4} - \binom{n+1}{4} \right)X_P \nonumber\\= n^2X_P,
\end{align}


which proves Corollary~\ref{cor_4dim_sym}.

\section{Counterexamples}\label{sec_Counterexamples}
In this section we present counterexamples to statements that hold in dimension 2 but fail in higher dimensions.

We begin with an example of a centrally symmetric polyhedron in $\mathbb{R}^d$ that does not yield a constant random variable. Consider the simplex formed by the basis vectors:
\begin{align}\label{eq_Delta}
    \Delta = \{\bx = (x_1, x_2, \ldots, x_d) \in \mathbb{R}^d \colon x_1 + x_2 + \ldots + x_d \leqslant 1, x_i \geqslant 0\}.
\end{align}
Let $\Delta'$ denote the simplex symmetric to $\Delta$ with respect to the center of the cube $[0,1]^d$. Then the polyhedron $P = [0,1]^d \setminus (\Delta \cup \Delta')$ is the desired one. Indeed:
\begin{itemize}
    \item It is clearly convex and centrally symmetric  by construction
    \item Since $P \subset [0,1]^d$, it contains at most one integer point
    \item The union $\Delta \cup P \cup \Delta' = [0,1]^d$ always contains exactly one integer point
    \item $\Delta$ contains this point with non-zero probability
    \item Therefore, $P$ contains either 0 or 1 integer points with non-zero probabilities
\end{itemize}

\vspace{0.5cm}
We construct a counterexample to Property 5 of Statement~\ref{baza}. Consider the following construction: take a polyhedron $P \subset \mathbb{R}^d$ and embed it into $\mathbb{R}^{d+1}$ by adding a zero last coordinate. Then consider the new polyhedron $P \oplus e$, where $e = (0,0,\ldots, 0, 1)$. 

The polyhedron $P \oplus e$ contains exactly as many integer points from $\mathbb{Z}^{d+1}$ as its projection $P$ contains integer points from $\mathbb{Z}^d$. Therefore, if $P \subset \mathbb{R}^d$ satisfies $X_P \neq \text{const}$, then the new polyhedron $P \oplus e$ will also satisfy $X_{P \oplus e} \neq \text{const}$.

Taking $P$ to be the polyhedron from the previous example, and considering both $P$ and $e$ as polyhedra in $\mathbb{R}^{d+1}$, we observe that:
\begin{itemize}
    \item $X_P = X_e = 0$ almost surely
    \item Yet $X_{P \oplus e} \neq 0$
\end{itemize}
Thus Property 5 fails to hold.

\vspace{0.5cm}
As a counterexample to Property 7 of Statement~\ref{baza}, consider the simplex $\Delta$ defined in (\ref{eq_Delta}). This simplex contains:
\begin{itemize}
    \item 1 integer point with probability $\vol(\Delta) = \frac{1}{d!}$
    \item 0 integer points with probability $1 - \frac{1}{d!} = \frac{d!-1}{d!}$
\end{itemize}
Clearly, this distribution is not symmetric about its mean.

\section{Zonotopes and Constant Random Variables}\label{sec_Zonotope}
In this section we prove that zonotopes generate constant random variables. First, observe that integer parallelepipeds almost surely contain a fixed number of integer points. Indeed, let a parallelepiped be generated by vectors $v_1, v_2, \ldots, v_d$. Consider the sublattice $V \subset \mathbb{Z}^d$ spanned by these vectors. We define an equivalence relation on integer points $\mathbb{Z}^d$ via the rule:
\begin{align}
    z_1 \sim z_2 \iff z_1 - z_2 \in V.
\end{align}

Then, if we translate the original parallelepiped so that it contains no integer points on its facets, it will contain exactly one representative from each equivalence class. It remains to observe that the probability of having integer points on the facets of the parallelepiped is zero.

Now let us consider integer zonotopes. Take a zonotope $Z = v_1 \oplus v_2 \oplus \ldots \oplus v_n$. This zonotope can be partitioned into $\binom{n}{d}$ parallelepipeds generated by all subsets of $d$ vectors (see \cite{McMullen}). Since the vectors $v_1, v_2, \ldots, v_n$ are integer, all parallelepipeds in the partition are integer and contain a fixed number of integer points. Consequently, the zonotope itself must contain a fixed number of integer points.


\section{Reeve Tetrahedron}\label{sec_Reeve}
In this section we compute the variance for Reeve tetrahedra, thereby demonstrating that in dimension 3, the variance is not a combinatorial invariant of polyhedra.

Fix $n \in \mathbb{Z}_+$ and consider the tetrahedron $T_n$ with vertices at $(0,0,0)$, $(0,1,0)$, $(1,0,0)$, and $(1,1,n)$. We will count the interior points of $T_n$ separately for each \textit{layer} $E_k = \{(x_1, x_2, \ldots, x_d) \in \mathbb{R}^d \colon x_d = k\}$. To do this, we introduce for each layer an indicator that the tetrahedron contains an integer point on that layer:
\begin{align}
    I_k = |(T_n + X) \cap E_k \cap \Z^d|.
\end{align}
Then $X_{T_n} = \sum_{k = 1}^{n} I_k$. We are interested in the quantity
\begin{align}
    \Var X_{T_n} = \sum_{i = 1}^n \sum_{j = 1}^n \cov(I_i, I_j).
\end{align} 

For convenience, let us introduce notation for the tetrahedron's section at each level: $S_k = (T_n + X) \cap E_k$. Let $X = (U, V, W)$, where $U, V, W$ are independent random variables uniformly distributed on $[0,1]$. Then $S_k$ is a triangle with vertices:
\begin{align*}
\left(\frac{k - W}{n} + U, 1 + V, k\right), \quad 
\left(1 + U, \frac{k - W}{n} + V, k\right), \quad 
\left(\frac{k - W}{n} + U, \frac{k - W}{n} + V, k\right).
\end{align*}

Note that for fixed $W = w$, $S_k$ becomes a triangle with vertices:
\begin{align*}
\left(\frac{k - w}{n}, 1, k\right), \quad 
\left(1, \frac{k - w}{n}, k\right), \quad 
\left(\frac{k - w}{n}, \frac{k - w}{n}, k\right),
\end{align*}
shifted by vector $(U, V, 0)$. This triangle lies entirely within the unit square and, when shifted, can only contain the point $(1,1,k)$. Therefore, $I_k$ is indeed an indicator variable taking values 0 or 1. In particular, $\mathbb{E} I_k^2 = \mathbb{E} I_k$.

For further calculations, we need to compute the sum $\sum_{k=1}^n \mathbb{E} I_k$. This can be done without explicitly computing each expectation: since $X_{T_n} = \sum_{k=1}^n I_k$, we have 
\begin{align*}
\sum_{k=1}^n \mathbb{E} I_k = \mathbb{E} X_{T_n} = \vol (T_n) = \frac{n}{6}.
\end{align*}

Next we compute $\mathbb{E}[I_k I_l]$. Fix $k < l$ and $W = w$. Consider two triangles in the coordinate plane $x_3 = 0$:
\begin{align}
    R_k& = \conv \left\{ \left(\frac{k-w}{n}, 1, 0\right);~\left(1, \frac{k-w}{n}, 0\right);~ \left(\frac{k-w}{n}, \frac{k-w}{n}, 0\right) \right\},\nonumber\\
    R_l& = \conv \left\{ \left(\frac{l-w}{n}, 1, 0\right);~\left(1, \frac{l-w}{n}, 0\right);~ \left(\frac{l-w}{n}, \frac{l-w}{n}, 0\right) \right\}.
\end{align}
Then $S_k = R_k + (U, V, k)$ and $S_l = R_l + (U, V, l)$. For fixed $W = w$, we have:
\begin{align}
    I_kI_l = \ind_{\{I_k = 1,~ I_l = 1\}} = \ind_{\{ (1, 1, k) \in S_k,~ (1, 1, l) \in S_l \}} = \ind_{\{ (1, 1, 0) \in R_k + (U, V, 0),~ (1, 1, 0) \in R_l + (U, V, 0) \}}\nonumber\\
    = \ind_{\{ (1, 1, 0) \in R_k \cap R_l + (U, V, 0) \}}.
\end{align}
Thus,
\begin{align}
    \mathbb{E}[I_kI_l|W = w] &= \mathbb{E} \mathbbm{1}_{\{ (1, 1, 0) \in R_k \cap R_l + (U, V, 0) \}} = \mathbb{P}\{ (1, 1, 0) \in R_k \cap R_l + (U, V, 0) \} \nonumber\\
    &= \text{area}(R_k \cap R_l).
\end{align}

Since $k < l$, the intersection $R_k \cap R_l \neq \varnothing$ if and only if the right angle vertex of $R_l$ lies below the hypotenuse of $R_k$, i.e., when $2(\frac{l-w}{n}) \leqslant \frac{k-w}{n} + 1$ or equivalently $2l \leqslant k + w + n$. In this case, the intersection area equals:
\begin{align}
    \text{area}(R_k \cap R_l) = \frac{(k + w + n - 2l)^2}{2n^2}.
\end{align}
Therefore, the conditional expectation becomes:
\begin{align}
    \mathbb{E}[I_kI_l|W = w] = \frac{(k + w + n - 2l)^2}{2n^2}\cdot \mathbbm{1}_{\{ 2l \leqslant k + w + n \}}.
\end{align}

Consequently, the unconditional expectation is:
\begin{align}
    \mathbb{E} I_kI_l = \int_0^1 \mathbb{E}[I_kI_l|W = w] \,dw = \int_0^1 \frac{(k + w + n - 2l)^2}{2n^2}\cdot \mathbbm{1}_{\{ 2l \leqslant k + w + n \}}\,dw.
\end{align}

Note that when $l > \frac{k + n}{2}$, we have $\mathbb{E} I_kI_l = 0$. Otherwise, the expectation evaluates to:
\begin{align}
    \mathbb{E} I_kI_l = \int_0^1 \frac{(k + w + n - 2l)^2}{2n^2}\,dw = \frac{-(k - 2l + n)^3 + (1 + k - 2l + n)^3}{6n^2}.
\end{align}

We now have all components needed to compute the variance of the Reeve tetrahedron. The calculation proceeds as follows:
\begin{multline*}
    \Var X_{T_n} = \sum_{k=1}^n \sum_{l=1}^n \cov(I_k, I_l) 
    = \sum_{k \neq l} (\mathbb{E} I_kI_l - \mathbb{E} I_k \mathbb{E} I_l) + \sum_{k=1}^n (\mathbb{E} I_k^2 - (\mathbb{E} I_k)^2) \\
    = 2 \sum_{k < l} \mathbb{E} I_kI_l - \sum_{k \neq l}\mathbb{E} I_k \mathbb{E} I_l + \sum_{k=1}^n (\mathbb{E} I_k - (\mathbb{E} I_k)^2) \\
    = 2 \sum_{k < l \leqslant \frac{n+k}{2}} \mathbb{E} I_kI_l + \sum_{k=1}^n \mathbb{E} I_k - \left(\sum_{k=1}^n \mathbb{E} I_k\right)\left(\sum_{l=1}^n \mathbb{E} I_l\right) \\
    = 2 \sum_{k < l \leqslant \frac{n+k}{2}} \frac{-(k - 2l + n)^3 + (1 + k - 2l + n)^3}{6n^2} + \frac{n}{6} - \frac{n^2}{36} \\
    = \frac{n^3 - 4n^2 + 4n - 1}{24n} + \frac{n}{6} - \frac{n^2}{36} 
    = \frac{n^3 + 12n - 3}{72n}.
\end{multline*}
\newpage

\end{document}